\documentclass{amsart}
\usepackage{amsmath,amssymb,amsthm,mathtools}
\usepackage{enumitem}
\usepackage{tikz-cd, hyperref}

\newtheorem{theorem}{Theorem}[section]

\newtheorem{proposition}[theorem]{Proposition}
\newtheorem{corollary}[theorem]{Corollary}
\numberwithin{equation}{section}
\theoremstyle{definition}

\newtheorem{remark}[theorem]{Remark}

\newcommand{\Z}{\mathbb Z}
\newcommand{\R}{\mathbb R}
\newcommand{\C}{\mathbb C}
\renewcommand{\P}{\mathbb P}
\newcommand{\cO}{\mathcal O}
\newcommand{\cR}{\mathcal R}
\newcommand{\CP}{\mathbb{CP}}
\newcommand{\wideand}{ \quad \text{ and } \quad }

\DeclareMathOperator{\cone}{cone}

\DeclareMathOperator{\SO}{SO}
\DeclareMathOperator{\PD}{PD}

\DeclareMathOperator{\id}{id}
\DeclareMathOperator{\Int}{Int}

\title{Cohomological rigidity of smooth toric Fano fourfolds}

\author[S. Choi]{Suyoung Choi}
\address{Department of Mathematics, Ajou University, 206, World Cup-ro, Yeongtong-gu, Suwon 16499, Republic of Korea}
\email{schoi@ajou.ac.kr}

\date{\today}
\thanks{This work was supported by the National Research Foundation of Korea Grant funded by the Korean Government (RS-2025-00521982).}
\subjclass[2020]{Primary 14M25; Secondary 57S12, 57R19, 14J45}
\keywords{cohomological rigidity, smooth toric Fano fourfolds, Bott manifolds, bistellar moves, toric flips}

\begin{document}
\begin{abstract}
    We prove that any two smooth toric Fano fourfolds whose integral cohomology rings are isomorphic as graded rings are diffeomorphic.
\end{abstract}
\maketitle

\section{Introduction}

The integral cohomology ring is not a complete invariant of closed smooth manifolds in general. 
For a geometrically restricted class, however, it can be a surprisingly strong invariant. 
This leads to the \emph{cohomological rigidity problem}: for a given class of manifolds, does an isomorphism of their integral cohomology rings as graded rings imply that the manifolds are diffeomorphic?
In this paper, a \emph{toric manifold} means a smooth complete toric variety.
The cohomological rigidity problem for toric manifolds was proposed by Masuda and Suh~\cite{Masuda-Suh2008} and has since been studied for many natural subclasses.
The defining complete nonsingular fan encodes the toric variety and also gives an explicit presentation of its integral cohomology ring.
Ordinary cohomology, however, forgets the torus action and need not retain the full fan data.
It is therefore not clear a priori whether it determines the underlying smooth manifold.

Among toric manifolds, smooth toric Fano varieties form a particularly important and manageable class. 
A smooth projective variety is \emph{Fano} if its anticanonical bundle is ample, and Fano varieties form one of the basic classes in algebraic geometry. 
In the toric setting, smooth toric Fano varieties correspond to smooth Fano polytopes. 
Consequently, there are only finitely many of them in each fixed dimension, and their classification becomes a
finite combinatorial problem. 
In complex dimension four, the classification due to Batyrev and Sato consists of exactly $124$ smooth toric Fano varieties up to isomorphism \cite{Batyrev1999,Sato2000}. 
This explicit algebraic classification makes it natural to ask for the coarser classification of their underlying smooth manifolds. 
Indeed, toric varieties defined by different fans may nevertheless be diffeomorphic.

Higashitani--Kurimoto--Masuda \cite{Higashitani-Kurimoto-Masuda2022} studied this topological classification through the cohomological rigidity problem. 
They proved that smooth toric Fano threefolds are cohomologically rigid and obtained the same conclusion for smooth toric Fano fourfolds except for a single pair. 
Let $X_{50}$ and $X_{57}$ denote the smooth toric Fano fourfolds with ID numbers $50$ and $57$ in their list. 
They proved that the integral cohomology rings of $X_{50}$ and $X_{57}$ are isomorphic and that one such isomorphism preserves the total Pontryagin class. 
Thus neither the cohomology ring nor the Pontryagin classes distinguish the two manifolds. 
They left open whether the two manifolds are diffeomorphic or even homeomorphic~\cite[Question~5.1]{Higashitani-Kurimoto-Masuda2022}. 
We settle this remaining case.

\begin{theorem} \label{thm:main}
    The toric manifolds $X_{50}$ and $X_{57}$ are orientation-preservingly diffeomorphic.
\end{theorem}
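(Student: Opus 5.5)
The plan is to exhibit $X_{50}$ and $X_{57}$ as related by a sequence of smooth operations that do not change the diffeomorphism type, guided by the keywords: Bott manifolds, bistellar moves, toric flips. First we identify the fans of $X_{50}$ and $X_{57}$ from the Batyrev--Sato list. Both have Picard number $4$, so each fan has $8$ rays in $\Z^4$. We then look for a common structure. The expectation is that each is a $\CP^1$-bundle or toric blow-up over a lower-dimensional toric manifold: for instance a blow-up of a Bott manifold along an invariant surface, or a projective bundle over $\CP^1\times\CP^1\times\CP^1$ or over a threefold. If both are $\CP^1$-bundles $\P(\cO\oplus L)$ over the same base $B$ (or over diffeomorphic bases) with line bundles $L, L'$, we reduce to showing the underlying $S^2$-bundles, i.e.\ the associated oriented $\R^4$-plane bundles $\underline{\C}\oplus L$ and $\underline{\C}\oplus L'$, are isomorphic up to the relevant equivalences. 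For such $S^2$-bundles, the diffeomorphism type depends only on $c_1(L)$ mod $2$ together with the first Pontryagin class $c_1(L)^2$. This is the standard argument used for Bott manifolds (Choi--Masuda--Suh).

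If no common bundle structure exists, we use the flip approach instead. Find a toric flip or bistellar move $\sigma\mapsto\sigma'$ on the underlying simplicial sphere that converts the fan of $X_{50}$ into a fan $\Sigma'$ (possibly non-Fano, possibly only a topological toric manifold / quasitoric manifold). A $4$-dimensional toric flip replaces a neighborhood $S^3\times D^5$-type piece (more precisely a local model $\cO(-1)^{\oplus 2}\oplus\cO(-1)$-type bundle over $\CP^1$ versus over $\CP^2$). In the balanced cases, such as the Atiyah-flop-like situation where an invariant $\CP^1\times\CP^2$ piece is replaced by another, the two local models are diffeomorphic rel boundary. The weights can then be matched so the operation is a diffeomorphism. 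Concretely, we would locate in $\Sigma_{50}$ a flippable circuit, verify the lattice data of the two sides agree up to an automorphism of the normal bundle, and check that the resulting fan is that of $X_{57}$, or of a toric manifold visibly equivariantly diffeomorphic to $X_{57}$ after relabeling.

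Orientation is tracked by checking that the induced map on $H^8$ sends fundamental class to fundamental class. Equivalently, it is compatible with the known cohomology isomorphism preserving Pontryagin classes from \cite{Higashitani-Kurimoto-Masuda2022}. If the orientation is reversed, we compose with complex conjugation twisted appropriately. Alternatively, we check that the top-degree value $\int c_1^4$-type invariants match.

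The main obstacle is the geometric middle step: proving that the local surgery (flip or change of bundle twist) is realized by a diffeomorphism. This requires an explicit bundle isomorphism, e.g.\ showing the relevant real rank-$4$ or rank-$6$ bundles over $\CP^1$ or $\CP^2$ agree. Over $\CP^1$, oriented real bundles are classified by $w_2$, so there it suffices to compare $c_1$ mod $2$. Over $\CP^2$, a stable bundle of rank $\ge 3$ is determined by $w_2$ and $p_1$, which we compute from Chern classes. I would try the $\CP^1$-bundle reduction first, since it reduces everything to such characteristic-class comparisons.
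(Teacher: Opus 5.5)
There is a genuine gap. The proposal is a menu of strategies, and neither branch can be carried out for this pair.

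\textbf{The $\CP^1$-bundle branch.} You say you would try this first, but it fails immediately. Both fans have minimal nonfaces $26$, $37$, $48$, $145$, $156$, $157$, $238$. Every vertex of the three two-element nonfaces also occurs in one of $145$, $156$, $157$, $238$. Hence the underlying simplicial $3$-sphere is not a suspension, and neither $X_{50}$ nor $X_{57}$ is a toric $\CP^1$-bundle.

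\textbf{The single-flip branch.} This fails as well. A bistellar move on a simplicial $3$-sphere is a $k$--$(5-k)$ move, so there is no balanced ``flop'' case. Each such move changes the number of maximal cones, i.e.\ the Euler characteristic, by $1$ or $3$. Since $\Sigma_{50}$ and $\Sigma_{57}$ have isomorphic underlying complexes, no fan obtained from $\Sigma_{50}$ by one move can give a manifold diffeomorphic to $X_{57}$. Indeed, the local models, $\cO(-1)^{\oplus3}$ over $\CP^1$ and $\cO(-1)^{\oplus2}$ over $\CP^2$, are not even homotopy equivalent.

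\textbf{The orientation fallback.} This also does nothing here: complex conjugation on a complex fourfold has degree $(-1)^4=1$.

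\textbf{What is missing.} The key idea is to go \emph{down} from both manifolds by the same $3$--$2$ move, replacing $\{1235,1258,1358\}$ by $\{1238,2358\}$. This lands in two Bott manifolds $B_{50}$ and $B_{57}$. One then transports a diffeomorphism of the Bott manifolds back up through the flip. This requires two things.
\begin{enumerate}
\item[(i)] A diffeomorphism $\Phi_0\colon B_{57}\to B_{50}$. This comes from strong cohomological rigidity of Bott manifolds, applied to an explicit ring isomorphism $\psi$.
\item[(ii)] Control of what $\Phi_0$ does near the flip center.
\end{enumerate}

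The crux is in (ii): the available $\psi$ preserves orientation but sends $\PD[C_{50}]$ to $-\PD[C_{57}]$. So one cannot simply make $\Phi_0$ agree with the identity of the local flip model. The paper handles this as follows.
\begin{enumerate}
\item[(a)] Since $\dim_\R B_i=8>2\cdot 2+2$, homotopic embeddings of $S^2$ are isotopic.
\item[(b)] Since $H^j(S^2;\pi_j(\SO(6)))=0$ for $j\le 2$, the normal bundle map can be homotoped to the standard one.
\item[(c)] Together, (a) and (b) let one isotope $\Phi_0$ until it equals the fiberwise conjugation $J_V$ on a tubular neighborhood $D_\varepsilon(E_V)$ of $C_{57}$.
\item[(d)] The map is then extended over the inserted $D_\varepsilon(E_W)$ by $J_W$, using $\chi_\varepsilon\circ J_V=J_W\circ\chi_\varepsilon$.
\end{enumerate}
Finally, $J_W$ preserves orientation because $E_W$ has complex dimension $4$.

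Conjugation is exactly the device that flips the sign of the center class while keeping the ambient orientation, which is the role you tried to give it globally. Note also that no characteristic-class comparison of bundles over $\CP^1$ or $\CP^2$ is needed. Once the $\CP^1$ side is put in standard form, the $\CP^2$ side is handled automatically by $J_W$.
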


Together with the classification results of~\cite{Higashitani-Kurimoto-Masuda2022}, the theorem completes the cohomological rigidity problem for smooth toric Fano fourfolds.

\begin{corollary}
    Any two smooth toric Fano fourfolds are diffeomorphic if their integral cohomology rings are isomorphic as graded rings.
\end{corollary}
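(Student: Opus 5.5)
The corollary follows by combining Theorem~\ref{thm:main} with the classification work of Higashitani--Kurimoto--Masuda~\cite{Higashitani-Kurimoto-Masuda2022}. The argument has three steps, carried out in order.

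First, I reduce to a finite problem. By the Batyrev--Sato classification~\cite{Batyrev1999,Sato2000}, smooth toric Fano fourfolds form exactly $124$ isomorphism classes. Any toric variety in a given class is diffeomorphic (indeed biholomorphic) to the representative with the corresponding ID number. So it suffices to check the statement for pairs $X_i, X_j$ with $1 \le i, j \le 124$. The case $i=j$ is trivial.

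Second, I invoke the results of~\cite{Higashitani-Kurimoto-Masuda2022} for pairs with $i \neq j$. They computed the integral cohomology rings of all $124$ varieties from the fan presentation (Stanley--Reisner ideal plus linear relations). They sorted them by ring-isomorphism type, using invariants such as Betti numbers and the structure of squares and cubes of degree-two classes. Their conclusion is that whenever $H^*(X_i;\Z)\cong H^*(X_j;\Z)$ as graded rings, with $i\neq j$ and $\{i,j\}\neq\{50,57\}$, the manifolds $X_i$ and $X_j$ are diffeomorphic. In each such case the diffeomorphism comes from an explicit geometric identification: both are Bott manifolds or projective bundles over a common base whose underlying smooth bundles are isomorphic, or they are related by operations known to preserve the diffeomorphism type. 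I take this case-by-case result as given; it is exactly the content summarized in the introduction, namely "the same conclusion for smooth toric Fano fourfolds except for a single pair."

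Third, I handle the exceptional pair $\{50,57\}$. These two rings are in fact isomorphic, and Theorem~\ref{thm:main} supplies an orientation-preserving diffeomorphism $X_{50}\cong X_{57}$. This settles the remaining case and proves the corollary.

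The only substantive ingredient is Theorem~\ref{thm:main}. The rest is bookkeeping. The one point to check carefully is that the cited classification really does cover every cohomologically equivalent pair other than $\{50,57\}$, rather than, say, only those among Bott manifolds. I would confirm this against the statement of their main theorem and the table in~\cite{Higashitani-Kurimoto-Masuda2022}, where they list the ring-isomorphism classes among the $124$ and exhibit a diffeomorphism within each class.
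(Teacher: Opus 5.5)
Your proposal is correct and follows the paper's own route: the corollary is obtained by combining the Higashitani--Kurimoto--Masuda result, which settles every cohomologically equivalent pair among the $124$ fourfolds except $\{X_{50},X_{57}\}$, with Theorem~\ref{thm:main} for that remaining pair. Your sketch of how their diffeomorphisms are built (via Bott manifolds, projective bundles, etc.) is speculative and not needed; a direct citation of their result is enough.
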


\section{The Bott manifolds}\label{sec:fan-bott}

After an appropriate relabeling of the rays and a unimodular change of lattice basis\footnote{Relative to the ray ordering in \cite[Table~27]{Higashitani-Kurimoto-Masuda2022}, the new orders are $(1,3,4,2,5,6,7,8)$ for $X_{50}$ and $(1,3,4,2,5,6,8,7)$ for $X_{57}$. The new lattice basis is $(e_1,e_3,e_4,e_2)$ in terms of the basis used there.}, we may write the fans of $X_{50}$ and $X_{57}$ as follows:
\begin{align*}
    v_1&=e_1,& v_2&=e_2,& v_3&=e_3,& v_4&=e_4,\\
    v_5&=-e_1+e_2+e_3-e_4,& v_6&=-e_2+e_4,& v_8&=-e_4,
\end{align*}
together with
$$
    v_7^{50}=-e_3+e_4 \wideand v_7^{57}=-e_3.
$$
We label each ray by the index of its primitive generator. 
For a subset $I=\{i_1,\ldots,i_k\}\subset \{1,\ldots,8\}$, we write $i_1\cdots i_k$ for $I$. 
Whenever $\sigma_I \coloneq \cone(v_j\mid j\in I)$ is a cone of the fan under consideration, let $V(I)$ denote the closure of the corresponding torus orbit. 
In particular, $D_j \coloneq V(j)$ is the torus-invariant prime divisor corresponding to the ray $\R_{\geq 0}v_j$.
With these conventions, the two fans have the same minimal nonfaces
$$
    26,\quad 37,\quad 48,\quad 145,\quad 156,\quad 157, \wideand 238.
$$
For each $i\in\{50,57\}$, let $\Sigma_i$ denote the fan of $X_i$, and $\Sigma_i^-$ the fan with primitive ray generators $v_1$, $\ldots$, $v_6$, $v_7^i$, and $v_8$ whose underlying simplicial complex is the cross-polytope complex with minimal nonfaces
\begin{equation}\label{eq:B-mnf}
    15,\quad 26,\quad 37, \wideand 48.
\end{equation}
The ray generators are in the standard triangular form of a four-stage Bott fan. 
Hence $\Sigma_i^-$ is smooth and complete. 
We denote the associated Bott manifold by~$B_i$.
For background on Bott manifolds, see, for example, \cite{Grossberg-Karshon1994, Masuda-Panov2008,Choi-Hwang-Jang2025}.

For a finite set $I$, let $\Delta_I$ denote the simplex with vertex set $I$. 
The underlying simplicial complexes of $\Sigma_i^-$ and $\Sigma_i$ differ only by the local bistellar move
$$
    \Delta_{238}*\partial\Delta_{15} \quad\longleftrightarrow\quad \partial\Delta_{238}*\Delta_{15}.
$$
Equivalently, at the level of simplices, this move replaces 
\begin{equation}\label{eq:bistellar}
    \{1238,2358\} \quad\longleftrightarrow\quad \{1235,1258,1358\}.
\end{equation}
This is the $2$--$3$ bistellar move shown in Figure~\ref{fig:bistellar}.
\begin{figure}[t]
    \centering
    \begin{tikzpicture}[
        scale=0.95,
        vertex/.style={circle,fill=black,inner sep=1.5pt},
        every node/.style={font=\small}
    ]
        \begin{scope}
            \coordinate (a2) at (-1.25,0);
            \coordinate (a3) at (1.25,0);
            \coordinate (a8) at (0,0.8);
            \coordinate (a1) at (-0.45,1.65);
            \coordinate (a5) at (0.45,-1.25);
        
            \fill[gray!25] (a2)--(a3)--(a8)--cycle;
            \draw (a2)--(a3)--(a8)--cycle;
            \foreach \x in {a2,a3,a8} {
                \draw[gray!70] (a1)--(\x);
                \draw[gray!70] (a5)--(\x);
            }
        
            \node[vertex,label=left:$2$] at (a2) {};
            \node[vertex,label=right:$3$] at (a3) {};
            \node[vertex,label=right:$8$] at (a8) {};
            \node[vertex,label=above left:$1$] at (a1) {};
            \node[vertex,label=below right:$5$] at (a5) {};
        
            \node at (0,-1.75) {$\Delta_{238}*\partial\Delta_{15}$};
            \node at (0,-2.2) {two tetrahedra sharing $238$};
        \end{scope}
        
        \draw[<->,thick] (2.05,0.2)--(3.95,0.2) node[midway,above] {$2$--$3$ move};
        
        \begin{scope}[xshift=6cm]
            \coordinate (b2) at (-1.25,0);
            \coordinate (b3) at (1.25,0);
            \coordinate (b8) at (0,0.8);
            \coordinate (b1) at (-0.45,1.65);
            \coordinate (b5) at (0.45,-1.25);
        
            \draw (b2)--(b3)--(b8)--cycle;
            \foreach \x in {b2,b3,b8} {
                \draw[gray!70] (b1)--(\x);
                \draw[gray!70] (b5)--(\x);
            }
            \draw[very thick] (b1)--(b5);
        
            \node[vertex,label=left:$2$] at (b2) {};
            \node[vertex,label=right:$3$] at (b3) {};
            \node[vertex,label=right:$8$] at (b8) {};
            \node[vertex,label=above left:$1$] at (b1) {};
            \node[vertex,label=below right:$5$] at (b5) {};
        
            \node at (0,-1.75) {$\partial\Delta_{238}*\Delta_{15}$};
            \node at (0,-2.2) {three tetrahedra sharing $15$};
        \end{scope}
    \end{tikzpicture}
    \caption{The local $2$--$3$ bistellar move. The shaded face on the left and the thick edge on the right lie in the interior of the triangular bipyramid.}
    \label{fig:bistellar}
\end{figure}
The corresponding primitive relation is
$$
    v_0 \coloneq v_1+v_5=v_2+v_3+v_8=(0,1,1,-1).
$$

\begin{proposition}\label{prop:common-flip}
    For each $i\in\{50,57\}$, the toric manifold $X_i$ is obtained from~$B_i$ by the standard flip corresponding to the above $2$--$3$ bistellar move.
    The centers in~$B_i$ and $X_i$ are $C_i=V(238)\cong\CP^1$ and $V(15)\cong\CP^2$, respectively.
    Moreover,
    $$
        \nu_{C_i/B_i}\cong\cO_{\CP^1}(-1)^{\oplus3}.
    $$
\end{proposition}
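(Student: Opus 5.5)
We verify the statement fan-theoretically, using the standard description of a toric flip as a bistellar move between two smooth complete fans on the same rays. The two fans $\Sigma_i^-$ and $\Sigma_i$ have the same rays $v_1,\dots,v_6,v_7^i,v_8$. By \eqref{eq:bistellar}, their maximal cones agree except that $\sigma_{1238},\sigma_{2358}$ in $\Sigma_i^-$ are replaced by $\sigma_{1235},\sigma_{1258},\sigma_{1358}$ in $\Sigma_i$. Both fans are complete and smooth: $\Sigma_i^-$ by the Bott triangular form, and $\Sigma_i$ because it is the fan of the smooth toric variety $X_i$.

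The first step is to identify the common region of the modified cones. The union $\sigma_{1238}\cup\sigma_{2358}$ equals the union of the three new cones, namely $\cone(v_1,v_2,v_3,v_5,v_8)$. This follows from the primitive relation $v_1+v_5=v_2+v_3+v_8$: the five vectors span $\R^4$, and their single linear relation has signs $(+,+)$ on $\{1,5\}$ and $(-,-,-)$ on $\{2,3,8\}$. Hence the two triangulations of this cone are exactly $\Delta_{238}*\partial\Delta_{15}$ and $\partial\Delta_{238}*\Delta_{15}$.

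This is the standard setting for Reid's description of toric flips, i.e.\ a wall-crossing along the circuit $v_1+v_5-v_2-v_3-v_8=0$. Consequently both $B_i$ and $X_i$ map to the common (singular) toric variety given by the fan in which the two sides are glued into the single cone $\cone(v_1,v_2,v_3,v_5,v_8)$. The map $B_i\to Y$ contracts exactly $V(238)$, and $X_i\to Y$ contracts exactly $V(15)$; both maps are isomorphisms away from these sets. The orbit closure $V(238)$ is one-dimensional, with star consisting of the two cones $1238,2358$, so it is $\CP^1$. The orbit closure $V(15)$ has its star given by the three cones $15\cdot 23,15\cdot 28,15\cdot 38$, which form the fan of $\CP^2$.

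The main check is the normal bundle. We have $\nu_{C_i/B_i}=\bigoplus_{j\in\{2,3,8\}}\cO(D_j)|_{C_i}$, so it suffices to show $D_j\cdot C_i=-1$ for $j=2,3,8$. To compute these intersection numbers, use the wall relation for the wall $\sigma_{238}$ between $\sigma_{1238}$ and $\sigma_{2358}$: the relation $v_1+v_5=v_2+v_3+v_8$ gives $v_1+v_5+\sum_j a_jv_j=0$ with $a_2=a_3=a_8=-1$. Hence $D_1\cdot C_i=D_5\cdot C_i=1$ and $D_j\cdot C_i=-1$ for $j=2,3,8$, which yields $\cO(-1)^{\oplus 3}$.

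By the same token, the normal bundle of $V(15)$ in $X_i$ is $\cO_{\CP^2}(-1)^{\oplus2}$. This is the standard picture of a flip, or more precisely a flop-type flip, of the kind called ``standard''. The two flipped regions are exchanged by the small modification, which completes the identification.

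The only real point to watch is whether $v_7^i$ enters at all. It does not, because $7\notin\{1,2,3,5,8\}$, which is exactly why the same argument works uniformly for $i=50,57$.
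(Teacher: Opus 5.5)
Your proof is correct. The normal-bundle part is the same as the paper's: $C_i=D_2\cap D_3\cap D_8$ transversely, and the wall relation for $\sigma_{238}$ gives $D_j\cdot C_i=-1$ for $j=2,3,8$.

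Where you differ is in identifying the modification as the standard flip. The paper goes up rather than down. It observes that the star subdivisions of $\Sigma_i^-$ at $238$ and of $\Sigma_i$ at $15$, both using the new ray spanned by $v_0=v_1+v_5=v_2+v_3+v_8$, coincide. So $B_i$ and $X_i$ have a common blow-up whose exceptional divisor $V(v_0)\cong\CP^1\times\CP^2$ is contracted along its two rulings. That is literally the definition of the standard flip, and it needs no convexity check.

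You instead pass to the common contraction $Y$. This requires slightly more verification:
\begin{itemize}
\item that $\cone(v_1,v_2,v_3,v_5,v_8)$ is strongly convex, which follows from the mixed signs of the unique linear relation;
\item that merging the two sides yields a fan, which holds because both triangulations induce the same boundary $\partial\Delta_{238}*\partial\Delta_{15}$.
\end{itemize}
In return, your route matches the local model of Section~3 more directly. The affine chart of $Y$ at the merged cone is the cone over the Segre embedding of $\CP^1\times\CP^2$, i.e.\ the $2\times 3$ matrices of rank at most one. Its complement of the vertex is exactly the space of nonzero rank-one tensors in $V\otimes W$ that the paper uses to glue $D_\varepsilon(E_V)$ to $D_\varepsilon(E_W)$.

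One inaccuracy: this is not ``flop-type''. Since $|\{1,5\}|=2\neq 3=|\{2,3,8\}|$, your own intersection numbers give $K_{B_i}\cdot C_i=-(1+1-1-1-1)=1\neq 0$. So $X_i\dashrightarrow B_i$ is a genuine $K$-negative flip, consistent with $X_i$ being Fano. Nothing in your argument depends on that remark.
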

\begin{proof}
    Since $v_0=v_2+v_3+v_8=v_1+v_5$, the star subdivisions of $\Sigma_i^-$ at $238$ and of $\Sigma_i$ at $15$, both with new ray $\R_{\geq0}v_0$, give the same fan.
    Thus the corresponding modification from $B_i$ to $X_i$ is the standard flip, with centers $V(238)\cong\CP^1$ and $V(15)\cong\CP^2$ on the two sides.

    The wall relation associated with $C_i$ is $v_1+v_5-v_2-v_3-v_8=0$.
    Hence
    $$
        D_2\cdot C_i=D_3\cdot C_i=D_8\cdot C_i=-1.
    $$
    Since $C_i=D_2\cap D_3\cap D_8$ transversely, it follows that
    $$
        \nu_{C_i/B_i} \cong \cO(D_2)|_{C_i}\oplus \cO(D_3)|_{C_i}\oplus \cO(D_8)|_{C_i} \cong \cO_{\CP^1}(-1)^{\oplus3}.
    $$
\end{proof}

Set $x=[D_5]$, $y=[D_6]$, $z=[D_7]$, and $u=[D_8]$.
The standard presentation of the cohomology ring of a smooth toric manifold~with \eqref{eq:B-mnf} gives
\begin{align*}
    H^\ast(B_{50};\Z) &\cong \frac{\Z[x,y,z,u]}{(x^2, y(y-x), z(z-x), u(u+x-y-z))}, \wideand\\
    H^\ast(B_{57};\Z) &\cong \frac{\Z[x,y,z,u]}{(x^2, y(y-x), z(z-x), u(u+x-y))}.
\end{align*}
All four generators have degree two.
Let $\psi\colon H^\ast(B_{50};\Z)\to H^\ast(B_{57};\Z)$ be a homomorphism defined by 
$$
    \psi(x)=-x+2z, \quad \psi(y)=z, \quad \psi(z)=z-y, \wideand \psi(u)=-u.
$$
A direct check shows that this assignment defines a graded ring isomorphism.
In both rings, $xyzu$ is the positive generator of~$H^8(B_i;\Z)$ determined by the complex orientation. 
Since $\psi(xyzu)=(-x+2z)z(z-y)(-u)=xyzu$, $\psi$ preserves the complex orientation.

Since $\PD[C_i]=[D_2][D_3][D_8]=(y-x)(z-x)u$, we obtain
\begin{equation}\label{eq:center-sign}
    \psi\bigl(\PD[C_{50}]\bigr) =-\PD[C_{57}].
\end{equation}

Strong cohomological rigidity of Bott manifolds says that every integral graded cohomology-ring isomorphism between Bott manifolds is induced by a diffeomorphism~\cite[Theorem~1.1]{Choi-Hwang-Jang2025}. 
Therefore there is an orientation-preserving diffeomorphism~$\Phi_0\colon B_{57}\to B_{50}$ such that $\Phi_0^*=\psi$.
By~\eqref{eq:center-sign} and Poincar\'e duality, 
\begin{equation}\label{eq:homology-sign} 
    (\Phi_0)_*[C_{57}]=-[C_{50}]. 
\end{equation}

\section{Construction of a diffeomorphism} \label{sec:transport}

We first describe the local model of the flip. 
Let $V=\C^2$ and $W=\C^3$, and let~$\gamma_V$ and~$\gamma_W$ be the tautological line bundles over $\P(V)=\CP^1$ and $\P(W)=\CP^2$, respectively. 
Put
$$
    E_V\coloneq \gamma_V\otimes W \cong \cO_{\CP^1}(-1)^{\oplus3}  \wideand E_W \coloneq V\otimes\gamma_W \cong \cO_{\CP^2}(-1)^{\oplus 2}.
$$
Equip $V$ and $W$ with conjugation-invariant Hermitian metrics.
Let $U(V)$ and~$U(W)$ denote their groups of complex-linear isometries.
We use the induced norm on~$V\otimes W$.

The complements of the zero sections in $E_V$ and $E_W$ are canonically identified with the space of nonzero rank-one tensors in $V\otimes W$. 
Indeed, a nonzero rank-one tensor $\xi$ determines unique lines $\ell\subset V$ and $m\subset W$. 
This gives a canonical $U(V)\times U(W)$-equivariant diffeomorphism $\chi\colon E_V\setminus\P(V)\to E_W\setminus\P(W)$ defined by~$\chi(\ell,\xi)=(m,\xi)$.
With respect to the tensor-product norm, $\chi$ preserves the norm and therefore restricts, for every $\varepsilon>0$, to a diffeomorphism
$$
    \chi_\varepsilon\colon \partial D_\varepsilon(E_V)\longrightarrow \partial D_\varepsilon(E_W).
$$

Let $S(V)$ and $S(W)$ denote the unit spheres in $V$ and $W$. 
The free circle action on $S(V)\times S(W)$ given by $\lambda\cdot(v,w)=(\lambda v,\lambda^{-1}w)$ yields $U(V)\times U(W)$-equivariant diffeomorphisms
$$
    \partial D_\varepsilon(E_V) \xleftarrow[\cong]{\alpha_V} \frac{S(V)\times S(W)}{S^1} \xrightarrow[\cong]{\alpha_W}\partial D_\varepsilon(E_W),
$$
where $\alpha_V([v,w])=([v],\varepsilon v\otimes w)$ and $\alpha_W([v,w])=([w],\varepsilon v\otimes w)$.
Since $S(V)\cong S^3$ and $S(W)\cong S^5$, both boundaries are equivariantly diffeomorphic to $\frac{S^3\times S^5}{S^1}$. 
Moreover, $\chi_\varepsilon=\alpha_W\circ\alpha_V^{-1}$.
Thus, on the underlying smooth manifolds, the standard flip replaces $D_\varepsilon(E_V)$ by $D_\varepsilon(E_W)$ along this common boundary identification \cite[Section~3.1]{Fu-Hoskins-PLehalleur2023}.

Complex conjugation on $S(V)\times S(W)$ descends to the involution $[v,w]\mapsto[\overline v,\overline w]$ on the common quotient. 
It induces involutions
$$
    J_V(\ell,\xi)=(\overline{\ell},\overline{\xi}) \wideand J_W(m,\xi)=(\overline{m},\overline{\xi}).
$$
The definition of $\chi$ gives $\chi\circ J_V=J_W\circ\chi$ on $E_V\setminus\P(V)$. 
In particular,
\begin{equation}\label{eq:conjugation}
    \chi_\varepsilon\circ J_V=J_W\circ\chi_\varepsilon.
\end{equation}

Now let us prove our main theorem.
\begin{proof}[Proof of Theorem~\ref{thm:main}]
    Let $\Phi_0\colon B_{57}\to B_{50}$ be the orientation-preserving diffeomorphism obtained in Section~\ref{sec:fan-bott}.
    We will modify $\Phi_0$ by isotopies so that it becomes compatible with the local model of the flip. 

    First, we isotope $\Phi_0$ to a diffeomorphism $\Phi_1\colon B_{57}\to B_{50}$ that maps $C_{57}$ to~$C_{50}$ as complex conjugation. 
    For each $i\in\{50,57\}$, choose a tubular neighborhood embedding $\tau_i\colon D_{\varepsilon_0}(E_V)\to B_i$ compatible with the standard local model above, and let
    $$
        \iota_i\coloneq \left.\tau_i\right|_{\P(V)} \colon \CP^1\longrightarrow C_i\subset B_i.
    $$
    We choose these local identifications so that $\iota_i$ preserves the complex orientation. 
    Let $c\colon\CP^1\to\CP^1$ be complex conjugation.
    By \eqref{eq:homology-sign},
    $$
        (\Phi_0\circ\iota_{57})_*[\CP^1] = -[C_{50}]= (\iota_{50}\circ c)_*[\CP^1] \in H_2(B_{50};\Z).
    $$
    Since the Bott manifold $B_{50}$ is simply connected, the Hurewicz map $\pi_2(B_{50})\to H_2(B_{50};\Z)$ is an isomorphism. 
    Hence the embeddings~$\Phi_0\circ\iota_{57}$ and $\iota_{50}\circ c$ are homotopic.
    
    Since $\dim_\R B_{50}=8>2\dim_\R \CP^1+2$, the stable embedding theorem implies that these embeddings are isotopic~\cite[Proposition~II]{Mazur1963}. 
    By the isotopy extension theorem~\cite[Proposition~III]{Mazur1963}, this isotopy extends to an ambient isotopy $\Psi_t$ of $B_{50}$ with $\Psi_0 = \id_{B_{50}}$.
    Thus $\Phi_1 \coloneq \Psi_1 \circ \Phi_0$ is an orientation-preserving diffeomorphism isotopic to~$\Phi_0$ and satisfies $\Phi_1\circ\iota_{57}=\iota_{50}\circ c$.
    In particular, $\Phi_1(C_{57})=C_{50}$, and~$\Phi_1|_{C_{57}}=c$ in the chosen parametrizations. 
    
    Next, we show that the normal bundle map induced by $\Phi_1$ is homotopic to $J_V$, and then further isotope $\Phi_1$ to a diffeomorphism $\Phi\colon B_{57}\to B_{50}$ compatible with the local model of the flip.
    
    Identify both normal bundles with $E_V$ by means of the chosen tubular neighborhoods. 
    Under these identifications, let $A\colon E_V\to E_V$ be the normal bundle map induced by $d\Phi_1$. 
    Then $A$ is a real vector-bundle isomorphism covering $c$.
    Both $A$ and $J_V$ reverse the orientations of the real rank-six fibers. 
    Indeed, $A$ reverses the normal orientation because $\Phi_1$ preserves the ambient orientation while $c$ reverses the orientation of the center. 
    The map $J_V$ reverses the fiber orientation because complex conjugation on a complex three-dimensional vector space has orientation sign~$(-1)^3=-1$. 
    Consequently, $G\coloneq J_V^{-1}A$ is an orientation-preserving automorphism of the underlying real rank-six bundle covering $\id_{\CP^1}$.
    
    Choose a bundle metric on the underlying oriented real rank-six bundle.
    Fiberwise polar decomposition gives a homotopy from $G$ through orientation-preserving bundle automorphisms to an orthogonal bundle automorphism. 
    Such an automorphism is a section of the bundle of orientation-preserving orthogonal automorphisms of the fibers of $E_V$. 
    This bundle has fiber~$\SO(6)$ and a canonical identity section.
    Standard obstruction theory for sections \cite[Part~III]{Steenrod1999book} shows that the section determined by the orthogonal automorphism is homotopic to the identity section. 
    Indeed, the obstructions to a homotopy over~$S^2\times I$, relative to $S^2\times\partial I$, lie in
    $$
        H^{j+1}\bigl(S^2\times I,S^2\times\partial I;\pi_j(SO(6))\bigr)\cong H^j\bigl(S^2;\pi_j(SO(6))\bigr)
    $$
    for $j=0,1,2$. 
    These groups vanish because
    $$
    \pi_0(SO(6))=\pi_2(SO(6))=0 \wideand H^1(S^2;\pi_1(SO(6)))=0.
    $$
    Thus $G$ is homotopic to the identity through orientation-preserving bundle automorphisms.
    It follows that $A=J_VG$ is homotopic to $J_V$ through real bundle isomorphisms covering $c$.

    Fix such a homotopy $A_t\colon E_V\to E_V$ for $0\leq t\leq 1$ with $A_0=A$ and $A_1=J_V$.
    Since the fiberwise operator norms of $A_t$ are uniformly bounded on the compact space $\CP^1\times[0,1]$, one can choose $0<\varepsilon\leq\varepsilon_0$ so that $A_t\bigl(D_\varepsilon(E_V)\bigr) \subset D_{\varepsilon_0}(E_V)$ for every $t\in[0,1]$.
    
    Set $h_0 \coloneq  \left.\Phi_1\circ\tau_{57}\right|_{D_\varepsilon(E_V)}$, $h_A \coloneq\left.\tau_{50}\circ A\right|_{D_\varepsilon(E_V)}$, and $h_1 \coloneq\left.\tau_{50}\circ J_V\right|_{D_\varepsilon(E_V)}$.
    The embeddings $h_0$ and $h_A$ agree on the zero section and induce the same normal bundle map $A$. 
    By the uniqueness theorem for tubular neighborhoods~\cite[Chapter~4, Theorem~5.3]{Hirsch1976book}, they are isotopic relative to the zero section, after decreasing $\varepsilon$ if necessary.
    The family $\tau_{50}\circ A_t|_{D_\varepsilon(E_V)}$ is an isotopy from $h_A$ to $h_1$, also relative to the zero section.
    Thus,$h_0$ and $h_1$ are isotopic relative to the zero section.

    After concatenating these isotopies, the ambient tubular neighborhood theorem~\cite[Chapter~8, Theorem~1.8]{Hirsch1976book} gives an ambient isotopy~$\Theta_t$ of~$B_{50}$, supported in an arbitrarily small neighborhood of $C_{50}$, such that, after decreasing $\varepsilon$ further if necessary, $\Theta_1\circ\Phi_1\circ\tau_{57}=\tau_{50}\circ J_V$ on~$D_\varepsilon(E_V)$.
    Define $\Phi \coloneq \Theta_1\circ\Phi_1$.
    Then $\Phi$ is an orientation-preserving diffeomorphism isotopic to~$\Phi_0$, and
    \begin{equation}\label{eq:J_V}
        \Phi\circ\tau_{57} =\tau_{50}\circ J_V \quad\text{on }D_\varepsilon(E_V).
    \end{equation}

    Perform the flip on both $B_{57}$ and $B_{50}$ by replacing the chosen copies of~$D_\varepsilon(E_V)$ with~$D_\varepsilon(E_W)$. 
    This gives $X_{57}$ and $X_{50}$. 
    Define $F$ by using $\Phi$ on the complements of the removed disk bundles and $J_W$ on the inserted copies of~$D_\varepsilon(E_W)$.
    
    To verify the compatibility on the boundary, let $q\in\partial D_\varepsilon(E_V)$. 
    In $X_{57}$, the point~$\tau_{57}(q)$ on the boundary of the complement is identified with $\chi_\varepsilon(q)$ on the inserted disk bundle. By
    \eqref{eq:conjugation} and \eqref{eq:J_V},
    $$
        J_W\bigl(\chi_\varepsilon(q)\bigr)=\chi_\varepsilon\bigl(J_V(q)\bigr)=\chi_\varepsilon\bigl( \tau_{50}^{-1}\circ\Phi\circ\tau_{57}(q) \bigr).
    $$
    The right-hand side is precisely the image of $\Phi(\tau_{57}(q))$ under the boundary identification defining~$X_{50}$. 
    Hence the two definitions of $F$ agree on the common boundary.
    The identity $\chi\circ J_V=J_W\circ\chi$ and \eqref{eq:J_V} show that the two maps agree on a collar of the common boundary. 
    Hence they glue smoothly to a diffeomorphism~$F\colon X_{57}\to X_{50}$.

    Finally, since complex conjugation in complex dimension~$d$ has real orientation sign~$(-1)^d$ and the total space of $E_W$ has complex dimension four, $J_W$ preserves orientation. 
    Since $\Phi$ is also orientation-preserving, the glued diffeomorphism $F$ preserves orientation.
\end{proof}
The map $F$ constructed in Theorem~\ref{thm:main} is illustrated in Figure~\ref{fig:flip-surgery}.

\begin{figure}[t]
    \centering
    \begin{tikzpicture}[
        >=Stealth,
        every node/.style={font=\small},
        exterior/.style={draw, thick, rounded corners=12pt,fill=gray!12},
        oldpiece/.style={draw, thick, fill=gray!35},
        newpiece/.style={draw, thick, fill=gray!65},
        empty/.style={draw, thick, dashed, fill=white},
        roadmap/.style={draw, thick, rounded corners=4pt, fill=white, align=center, inner sep=4pt, font=\scriptsize}
    ]

    \draw[exterior] (0,1.5) rectangle (2.5,3.1);
    \draw[oldpiece] (1.25,2.3) ellipse [x radius=.65,y radius=.42];
    \node at (1.25,2.3) {$D_\varepsilon(E_V)$};
    \node at (1.25,3.3) {$B_i$};
    \draw[->,thick] (2.7,2.3) -- (3.6,2.3) node[midway,above] {remove};
    \draw[exterior] (3.8,1.5) rectangle (6.3,3.1);
    \draw[empty] (5.05,2.3) ellipse [x radius=.65,y radius=.42];
    \node[font=\scriptsize] at (5.05,2.3) {$\partial D_\varepsilon(E_V)$};
    \node at (5.05,3.3) {$B_i\setminus\operatorname{Int}D_\varepsilon(E_V)$};
    \draw[oldpiece] (4.25,.8) ellipse [x radius=.7,y radius=.4];
    \node at (4.25,.8) {$D_\varepsilon(E_V)$};
    \draw[->,thick] (4.8,1.9) to[out=225,in=80,looseness=1.05] (4.35,1.2);
    \draw[newpiece] (5.85,.8) ellipse [x radius=.7,y radius=.4];
    \node at (5.85,.8) {$D_\varepsilon(E_W)$};
    \draw[->,thick] (5.75,1.18) to[out=100,in=315,looseness=1.05] (5.3,1.9);
    \draw[->,thick] (6.5,2.3) -- (7.5,2.3) node[midway,above] {glue by $\chi_\varepsilon$};
    \draw[exterior] (7.7,1.5) rectangle (10.2,3.1);
    \draw[newpiece] (8.95,2.3) ellipse [x radius=.65,y radius=.42];
    \node at (8.95,2.3) {$D_\varepsilon(E_W)$};
    \node at (8.95,3.3) {$X_i$};

    \begin{scope}
        \clip[rounded corners=8pt] (0,-1.7) rectangle (9.4,-.7);
        \fill[gray!12] (0,-1.7) rectangle (9.4,-.7);
        \fill[gray!65] (6.6,-1.7) rectangle (9.4,-.7);
    \end{scope}

    \draw[thick,rounded corners=8pt] (0,-1.7) rectangle (9.4,-.7);
    \draw[thick,dashed] (6.6,-1.7) -- (6.6,-.7);
    \node at (3.3,-1.2) {$B_{57}\setminus \Int D_\varepsilon(E_V)$};
    \node at (8,-1.2) {$D_\varepsilon(E_W)$};
    \draw[thick] (0,-.55) -- (0,-.45) -- (9.4,-.45) -- (9.4,-.55);
    \node at (4.7,-.2) {$X_{57}$};

    \begin{scope}
        \clip[rounded corners=8pt] (0,-4.1) rectangle (9.4,-3.1);
        \fill[gray!12] (0,-4.1) rectangle (9.4,-3.1);
        \fill[gray!65] (6.6,-4.1) rectangle (9.4,-3.1);
    \end{scope}

    \draw[thick,rounded corners=8pt] (0,-4.1) rectangle (9.4,-3.1);
    \draw[thick,dashed] (6.6,-4.1) -- (6.6,-3.1);
    \node at (3.3,-3.6) {$B_{50}\setminus \Int D_\varepsilon(E_V)$};
    \node at (8,-3.6) {$D_\varepsilon(E_W)$};
    \node[roadmap] at (2.8,-2.4) { $\Phi_0 \xrightarrow{\;\text{match the center}\;} \Phi_1$ \\[-1pt] $\Phi_1 \xrightarrow{\;\text{match the normal data}\;} \Phi$};
    \draw[->,thick] (5.9,-1.85) -- (5.9,-2.95) node[midway,right] {$\Phi$};
    \draw[->,thick] (9,-1.85) -- (9,-2.95) node[midway,right] {$J_W$};
    \draw[->,thick] (-.45,-1.2) -- (-.45,-3.6) node[midway,left] {$F$};
    \draw[densely dashed,gray!70] (6.6,-1.7) -- (6.6,-3.1);
    \node[roadmap] at (7.75,-2.4) { $\chi_\varepsilon\circ J_V$\\[1pt]$=J_W\circ\chi_\varepsilon$ };

    \draw[thick] (0,-4.25) -- (0,-4.35) -- (9.4,-4.35) -- (9.4,-4.25);
    \node at (4.7,-4.6) {$X_{50}$};
    \end{tikzpicture}

    \caption{
        The flip replaces $D_\varepsilon(E_V)$ with $D_\varepsilon(E_W)$.
        Starting from the diffeomorphism~$\Phi_0\colon B_{57}\to B_{50}$, we obtain, through ambient isotopies, a diffeomorphism~$\Phi$ that agrees with the standard conjugation model on the tubular neighborhoods.
        The maps $\Phi$ on the complements and $J_W$ on the inserted disk bundles glue to form a diffeomorphism~$F\colon X_{57}\to X_{50}$.
    }
    \label{fig:flip-surgery}
\end{figure}

\section{Existence of diffeomorphisms across bistellar moves}

The proof of Theorem~\ref{thm:main} uses only the local model of a full-dimensional $2$--$(n-1)$ bistellar move. 
We record the resulting criterion.

Fix $n\ge 4$. 
For each $i\in\{0,1\}$, let $\Sigma_i^-$ and $\Sigma_i^+$ be smooth complete $n$-dimensional fans, and let $\cR_i$ denote their common set of rays.
Suppose that there are disjoint subsets $P_i,Q_i\subset\cR_i$ with 
$$
    |P_i|=2 \wideand |Q_i|=n-1,
$$
such that the underlying simplicial complex of $\Sigma_i^+$ is obtained from that of $\Sigma_i^-$ by the bistellar move
$$
    \Delta_{Q_i}*\partial\Delta_{P_i} \rightsquigarrow \partial\Delta_{Q_i}*\Delta_{P_i}.
$$
Assume also that
$$
    \sum_{\rho\in P_i}v_\rho =\sum_{\rho\in Q_i}v_\rho,
$$
where $v_\rho$ denotes the primitive generator of the ray $\rho$.

Set $B_i:=X_{\Sigma_i^-}$, $X_i:=X_{\Sigma_i^+}$, and $C_i:=V_{\Sigma_i^-}(Q_i)\cong\CP^1$.

\begin{theorem}\label{thm:criterion_flip}
    Suppose that there are a diffeomorphism~$\Phi\colon B_1\to B_0$ and a sign~$\varepsilon\in\{\pm1\}$ such that $\deg(\Phi)=\varepsilon^n$ and $\Phi_*[C_1]=\varepsilon[C_0]$.
    Then $\Phi$ is isotopic to a diffeomorphism compatible with the two flips. 
    Consequently, there is a diffeomorphism~$F\colon X_1\to X_0$ such that $\deg(F)=\deg(\Phi)=\varepsilon^n$.
\end{theorem}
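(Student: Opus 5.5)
The plan is to generalize the proof of Theorem~\ref{thm:main}, replacing $W=\C^3$ by $W=\C^{n-1}$ and letting the sign $\varepsilon$ decide between the identity model and the conjugation model.

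\textbf{Local model.} First I would check, exactly as in Proposition~\ref{prop:common-flip}, that $X_i$ is obtained from $B_i$ by the standard flip with centers $C_i\cong\CP^1$ and $V(P_i)\cong\CP^{n-2}$. The relation $\sum_{P_i}v_\rho=\sum_{Q_i}v_\rho$ gives $D_\rho\cdot C_i=-1$ for every $\rho\in Q_i$. Hence
\[
\nu_{C_i/B_i}\cong\cO_{\CP^1}(-1)^{\oplus(n-1)}=E_V,
\]
where now $W=\C^{n-1}$ and $E_W=V\otimes\gamma_W$. The discussion of Section~\ref{sec:transport} goes through verbatim: $\chi_\varepsilon$ identifies $\partial D(E_V)$ with $\partial D(E_W)$, and $\chi\circ J_V=J_W\circ\chi$. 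Set $K_V=\id$ and $K_W=\id$ if $\varepsilon=1$, and $K_V=J_V$ and $K_W=J_W$ if $\varepsilon=-1$. In either case $\chi\circ K_V=K_W\circ\chi$.

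\textbf{Matching the center.} Fix orientation-preserving parametrizations $\iota_i\colon\CP^1\to C_i$ coming from tubular neighborhoods $\tau_i$. Let $c_\varepsilon$ be $\id$ or $c$ according to the sign. Then
\[
(\Phi\circ\iota_1)_*[\CP^1]=\varepsilon[C_0]=(\iota_0\circ c_\varepsilon)_*[\CP^1].
\]
One point needs care here: the hypothesis does not say $B_0$ is simply connected. However, $B_0$ is a smooth complete toric variety, and these are always simply connected. So Hurewicz gives a homotopy between the two embeddings. Since $2n>2\cdot2+2$ for $n\ge4$, Mazur's results give an isotopy and then an ambient isotopy. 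This produces a diffeomorphism $\Phi_1$, isotopic to $\Phi$, with $\Phi_1\circ\iota_1=\iota_0\circ c_\varepsilon$.

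\textbf{Matching the normal data.} This is the main step. Let $A$ be the normal map of $\Phi_1$; it covers $c_\varepsilon$. The orientation sign of $A$ on fibers is $\deg(\Phi)\cdot\varepsilon=\varepsilon^{n+1}$. The orientation sign of $K_V$ on the real rank-$2(n-1)$ fibers is $\varepsilon^{n-1}=\varepsilon^{n+1}$. These agree, so $G=K_V^{-1}A$ is an orientation-preserving automorphism covering $\id_{\CP^1}$. This sign bookkeeping is exactly where the hypothesis $\deg\Phi=\varepsilon^n$ is used.

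Next, polar decomposition reduces $G$ to a section of an $\SO(2n-2)$-bundle over $S^2$. The obstructions to homotoping this section to the identity lie in $H^j(S^2;\pi_j\SO(2n-2))$ for $j=0,1,2$. These groups vanish because $\pi_0=\pi_2=0$ and $H^1(S^2;\cdot)=0$. Hence $A\simeq K_V$.

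The uniqueness of tubular neighborhoods and the ambient tubular neighborhood theorem, applied as before, then isotope $\Phi_1$ to $\Phi'$ with
\[
\Phi'\circ\tau_1=\tau_0\circ K_V \quad\text{on } D_\varepsilon(E_V).
\]
This is the sense in which $\Phi'$ is ``compatible with the two flips.''

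\textbf{Gluing.} Define $F$ to be $\Phi'$ on the complement of the removed disk bundle and $K_W$ on the inserted $D(E_W)$. The identity $\chi\circ K_V=K_W\circ\chi$ shows the two pieces agree on a collar, so they glue to a diffeomorphism. Its degree is the product of the orientation sign of $K_W$ on the complex $n$-dimensional total space, namely $\varepsilon^n$, and the degree of $\Phi'$. These two signs must be consistent on the gluing region. Since $\Phi'$ is isotopic to $\Phi$, we get $\deg F=\deg\Phi=\varepsilon^n$.

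I expect the only genuinely new difficulties to be the simple-connectivity remark and the sign bookkeeping in the normal-data step. Everything else should follow the proof of Theorem~\ref{thm:main} step by step.
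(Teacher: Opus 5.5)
Your proposal is correct and follows the paper's route. The paper's proof just says to repeat the proof of Theorem~\ref{thm:main} with the identity or complex conjugation on the local model according to the sign $\varepsilon$; your sign bookkeeping (fiber sign $\varepsilon^{n+1}$ for both $A$ and $K_V$) and your remark that complete toric varieties are simply connected supply the details left implicit there.

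One small slip: in the gluing step, $\deg F$ is not the \emph{product} of the sign of $K_W$ and $\deg\Phi'$, which would give $\varepsilon^{2n}=1$. It is their common value $\varepsilon^n$, because $F$ has local orientation sign $\varepsilon^n$ on both pieces of the connected manifold $X_1$.
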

\begin{proof}
    The proof is the same as that of Theorem~\ref{thm:main}, using the identity or complex conjugation on the standard local flip model according as $\varepsilon=1$ or $\varepsilon=-1$. 
    The assumption $n\ge 4$ gives the required stable embedding range.
\end{proof}

\begin{remark}
    Let $\omega_i\in H^{2n}(B_i;\Z)$ be the generator determined by the complex orientation, and set
    $$
        \alpha_i \coloneq \PD_{B_i}[C_i] = \prod_{\rho\in Q_i}[D_\rho] \in H^{2n-2}(B_i;\Z).
    $$
    Writing $\psi=\Phi^*$, the two sign conditions in Theorem~\ref{thm:criterion_flip} are equivalent to
    $$
        \psi(\omega_0)=\varepsilon^n\omega_1 \wideand \psi(\alpha_0)=\varepsilon^{n+1}\alpha_1.
    $$
    Thus, whenever the realizability of $\psi$ by a diffeomorphism is known, the remaining hypotheses can be checked from the fan data and the cohomology rings.
\end{remark}

\providecommand{\bysame}{\leavevmode\hbox to3em{\hrulefill}\thinspace}
\providecommand{\MR}{\relax\ifhmode\unskip\space\fi MR }
\providecommand{\MRhref}[2]{%
  \href{http://www.ams.org/mathscinet-getitem?mr=#1}{#2}
}
\providecommand{\href}[2]{#2}

\end{document}